\newtheorem{theorem}{Theorem}
\newtheorem{lemma}{Lemma}%
\newtheorem{definition}{Definition}%
\newcommand*{\R}{{\mathbb R}}
\DeclareMathOperator*{\argmin}{argmin}
\newcommand{\sm}[2]{\begin{smallmatrix}\item1\\\item2 \end{smallmatrix}}
\def\R{\mathbb{R}}
\newcommand\ddfrac[2]{\frac{\displaystyle \item1}{\displaystyle \item2}}
\def\eps{\varepsilon}
\newcommand{\mc}[1]{\mathbb{\item1}}
\newcommand{\sol}{\mathcal{X}^\ast}
\begin{document}

\title[Accelerated methods for weakly-quasi-convex optimization problems.]{Accelerated methods for weakly-quasi-convex optimization problems.}

\author[1]{\fnm{Sergey} \sur{Guminov}}\email{sergey.guminov@gmail.com}

\author[2,3,4]{\fnm{Alexander} \sur{Gasnikov}}\email{gasnikov@yandex.ru}

\author*[2,4]{\fnm{Ilya} \sur{Kuruzov}}\email{kuruzov.ia@phystech.edu}

\affil[1]{\orgname{National Research University Higher School of Economics}, \orgaddress{\city{Moscow}, \country{Russia}}}

\affil[2]{\orgname{Moscow Institute of Physics and Technology}, \orgaddress{\city{Dolgoprudny}, \country{Russia}}}

\affil[3]{\orgname{Skoltech}, \orgaddress{\city{Moscow}, \country{Russia}}}

\affil[4]{\orgname{Institute for Information Transmission Problems}, \orgaddress{\city{Moscow}, \country{Russia}}}


\maketitle

\begin{abstract}

We provide a quick overview of the class of $\alpha$-weakly-quasi-convex problems and its relationships with other problem classes. We show that the previously known Sequential Subspace Optimization method retains its optimal convergence rate when applied to minimization problems with smooth $\alpha$-weakly-quasi-convex objectives. We also show that Nemirovski's conjugate gradients method of strongly convex minimization achieves its optimal convergence rate under weaker conditions of $\alpha$-weak-quasi-convexity and quad\-ratic growth. Previously known results only capture the special case of 1-weak-quasi-convexity or give convergence rates with worse dependence on the parameter $\alpha$.
\end{abstract}

\keywords{
Non-convex minimization 
\and First-order methods
\and Accelerated methods
}

\let\oldproofname=\proofname
\renewcommand{\proofname}{\rm\bf{\oldproofname}}
\renewcommand{\leq}{\leqslant}
\renewcommand{\geq}{\geqslant}

\date{}
\maketitle

\section*{Introduction}

One of the most well-studied optimization problem classes is the class of minimization of convex functions with Lipschitz continuous derivatives. In this case, one of the most important implications of the convexity is its unimodality: the local minima of the object function are also global minima and comprise a convex subset.

However, there exist conditions weaker than convexity which still imply the unimodality of the objective function and the convergence of particular first-order minimization methods to a global minimum. One example of such condition is star-convexity, which is sufficient to derive the convergence rate of the cubic regularization of the Newton method \cite{nesterov2006cubic}. Besides, the recent works \cite{Kleinberg, zhou2018sgd} allows us to think that loss function for neural methods around minima for some modern neural networks is star convex. Another example is weak-quasi-convexity, which is a further generalization of star-convexity. 

The example of weakly-quasi-convex but not star-convex function was shown in \cite{WQC}. The authors considered recurrent neural network without non-linear activation for approximation of sequence generated by unknown model from the same class. The quadratic loss function was chosen. It was approximated by so-called idealized risk, and this new loss function is weakly-quasi-convex under natural assumption about parameters of unknown model. However, in the mentioned work the problem is solved using the stochastic gradient descent method, which is not optimal even in the convex case.

Another important example was provided in the recent work \cite{Wang_Wibisono_2023}. In this work, authors consider the generalized linear model -- combination of linear layer with non-linear activation. The paper contains proof that square loss minimization problem is quasar-convex for some typical non-linear activations like Leaky-ReLU, logistic functions and quadratic functions. 

Some first-order methods can be applied to star-convex or $\alpha$-weakly-quasi-convex problems without any modifications. The most basic examples of such methods are the gradient descent method and the stochastic gradient descent method \cite{WQC}, \cite{Gower_2021}. However, at the time of writing, no accelerated first-order methods have been shown to achieve optimal convergence  rates in the general weakly-quasi-convex setting. The purpose of this work is to construct such methods.

In this paper we obtain Theorem \ref{aSESOP} devoted to Sequential Subspace Optimization (SESOP) method. It indicates that this method has optimal convergence rate for weakly-quasi-convex functions. The main disadvantage of this method is that at each iteration we have to solve a possibly difficult auxiliary problem in $\mathbb{R}^3$. Nevertheless, this method works for any smooth weakly-quasi-convex function and does not require any additional information about function like Lipschitz constant for gradient. 

Another considered method is Nemirovski's Conjugate Gradient method. It is well-known that it has optimal convergence rate for strongly convex minimization if one use restarts technique. Theorem \ref{CG_theorem} presents a generalized result for the class of $\alpha$-weakly-quasi-convex functions satisfying the quadratic growth condition and with Lipschitz continuous gradients.

Since the first version of this paper became available online, some new results have been established.\footnote{The first version appears in 2017 as arXive paper \url{https://arxiv.org/pdf/1710.00797.pdf}. We did not publish it since we try to eliminate auxiliary subspace optimization. In 2018 motivated by our arXive paper it was done in \cite{hinder2019near}. We lost the interest for publishing. But in the last few years we fulfilled different experiments with SESOP type algorithms and have observed that these algorithms, which use subspace optimization (not line search!), converge for smooth strongly convex problems much faster than standard accelerated methods, which required as input smoothness and strong convexity constants. SESOP does not require any input information. To the best of our knowledge this observation shed a light on the classical problem -- to develop accelerated method that is adaptive in strong convexity parameter. The solution could be -- to use two or three dimensional subspace optimization at each iteration. Based on this new observation we understand that subspace optimization could be  an advantage in practice and decide to publish our arXive paper to draw attention to all these issues...} In \cite{nesterov2020} we present an accelerated first-order method with provable convergence in the weakly-quasi-convex case. However, the convergence rate established for that method has worse dependence on parameter of weakly-quasi-convexity than the methods studied in this work. In \cite{hinder2019near} a worst-case lower bound on the complexity (in terms of the required number of gradient evaluations) of a first order method is established. This bound coincides with the complexity bound established for the SESOP method in this work. The authors also present a novel first-order method with a complexity optimal up to a logarithmic factor.

The last section of this paper is devoted to numerical experiments. Note, that SESOP method does not require any function parameters and it can be considered as method that is adaptive to Lipschitz constant of gradient. Nevertheless, the adaptive to parameter of strong convexity methods are not known now. In the last section, we provide some experiments evidencing that SESOP can work faster than known optimal methods for strong convex functions. We compare its with well-known Nesterov accelerated gradient descent and considered Conjugate Gradients method with restarts. It is important, that these methods require knowledge about parameter of strong convexity. Let us highlight that we demonstrate that SESOP converges better in some special cases not only for iterations but for time too.



\section*{Preliminaries}

Throughout this paper we will be dealing with the global minimization problem \[f(x)\rightarrow\min_{x\in \R^n}.\] $f:\ \R^n \to \mathbb{R}$ is assumed to be differentiable and $L$-smooth:

\[\|\nabla f(y)-\nabla f(x)\|\leq L\|y-x\|\quad \forall x,y\in\R^n.\] We will also assume that the solution set $\mathcal{X}^\ast$ is not empty and denote $f^\ast=\min\limits_{x\in\mathbb{R}^n} f(x)$.  $\|\cdot\|$ denotes the Euclidean norm, $\langle\cdot,\cdot\rangle$ denotes the scalar product defined as $\langle x,y\rangle=\sum\limits_{i=1}^n x_i y_i$.

\section{Conditions}

In our work we will be studying a generalization of convexity called $\alpha$-weak-quasi-convexity, as defined in \cite{WQC}.

\begin{definition}\label{Def_alpha}

A differentiable function $f:\ \R^n \to \mathbb{R}$ is said to be $\alpha$-weakly-quasi-convex ($\alpha$-WQC) with respect to $x^\ast\in\mathcal{X}^\ast$ with parameter $\alpha\in(0,1]$ if for all $x\in\R^n$
\[\alpha(f(x)-f^\ast)\leqslant  \langle\nabla f(x),x-x^\ast\rangle .\]
\end{definition}

 {Note, that} $\alpha$-weak-quasi-convexity guarantees that any local minimizer of $f$ is also a global minimizer. This condition controls the distance between the graph of the function and the tangent plane to the function's graph constructed at any point. Any convex function attaining its minimum is also 1-WQC, but the converse is generally not true, even for functions of one variable. The function $f(x) = |x|(1 - e^{-|x|})$ is an example of a non-convex 1-WQC function.

To weaken strong convexity we will be using the quadratic growth (QG) condition \cite{QG,bonnans1995second}. 

\begin{definition}

A function $f:\ \R^n \to \mathbb{R}$ is said to satisfy the quadratic growth condition if for some $\mu>0$ and for all $x\in\R^n$

\[f(x)-f^\ast\geq \frac{\mu}{2}\|x-P(x)\|^2, \]where $P(x)$ is the projection of $x$ onto $\sol$.
\end{definition}

Note that the same condition appears in \cite{nesterov2006cubic} as a property of the solution set. The solution set $\sol$  is called globally non-degenerate if it satisfies the inequality in the definition of QG.

Though this condition shares some similarities with strong convexity, a non-strongly convex function may still satisfy it. Function $f(x)=(\|x\|^2-1)^2$ serves as an example. \cite{nesterov2006cubic}.

\subsection{Relationship with other conditions}

Naturally, $\alpha$-weak-quasi-convexity and the QG condition are not the only ways to weaken convexity and strong convexity. What follows is a short list of other similar conditions and their relationships with the ones used in this paper.

Let us define the Polyak--\L{}ojasiewicz condition -- another condition used to replace strong convexity in convergence arguments. Note, one of the recent nice application of this condition \cite{fatkhullin2020optimizing}.

\begin{definition}\label{PL-cond}
A function $f:\ \R^n \to \mathbb{R}$ is said to satisfy the Polyak--\L{}ojasiewicz condition if for some $\mu>0$\ and for all $x\in\R^n$

\[\frac{1}{2}\|\nabla f(x)\|^2\geq \mu(f(x)-f^\ast).\]
\end{definition}
As shown in \cite{PL}, QG is weaker than the the Polyak--\L{}ojasiewicz condition.

Following \cite{nesterov2006cubic}, we define star-convexity.

\begin{definition}
A function $f:\ \R^n \to \mathbb{R}$ is called star-convex if for any $x^\ast\in\sol$ and any $x\in\R^n$ we have
\[f(\lambda x^\ast + (1 -\lambda)x) \leq \lambda f (x^\ast) + (1-\lambda)f (x)\quad \forall x\in\R^n,\  \forall \lambda\in [0, 1].\]
\end{definition}

This definition is ideologically similar to that of $\alpha$-WQC in the way it restricts convexity to the direction towards the solution set $\sol$. In fact, for differentiable functions and with $\alpha=1$ these two definitions are equivalent.

\begin{lemma}
Let function $f:\mathbb{R}^n \rightarrow \mathbb{R}$ be differentiable. Then it is 1-WQC if and only if it is star-convexity.
\end{lemma}
\begin{proof}

$\Rightarrow$\\
Let us assume that $f(x)$ is not star-convex:
\[\exists \lambda\in(0,1),\ x\in\R^n\ s.t.\ f(\lambda x^\ast+(1-\lambda)x)-\lambda f(x^\ast)-(1-\lambda)f(x)>0. \]

By maximizing the LHS of the above inequality over $\lambda\in(0,1)$ we get that for some $\lambda^\ast\in(0,1)$ and $z=\lambda^\ast x^\ast+(1-\lambda^\ast)x$
\[\langle\nabla f(z),x-x^\ast\rangle=f(x)-f(x^\ast)\] and \[f(z)>\lambda^\ast f(x^\ast)+(1-\lambda^\ast) f(x).\] Now we note that $x-x^\ast=\frac{z-x^\ast}{1-\lambda^\ast}$ and $f(z)-f(x^\ast)>(1-\lambda^\ast)(f(x)-f(x^\ast)).$ This in turn implies that $\langle\nabla f(z),z-x^\ast\rangle<f(z)-f(x^\ast),$ so $f(x)$ is not 1-WQC.

$\Leftarrow\quad $If $f$ is star-convex, then 
\[f(x^\ast)-f(x)\geq \frac{f(x+\lambda(x^\ast-x))-f(x)}{\lambda}\quad \forall \lambda\in(0,1), x\in\R^n.\] Taking the limit $\lambda\to +0$, we obtain $f(x^\ast)-f(x)\geq \langle \nabla f(x), x^\ast-x\rangle,$ so $f(x)$ is 1-WQC.
\end{proof}

Another condition was recently introduced in \cite{csiba2017global} to generalize convexity.  Called the weak PL inequality, in our notation it may be defined as follows.

\begin{definition}
A function $f(x)$ is said to satisfy the weak PL inequality with respect to $x^\ast\in\sol$ if for some $\mu>0$ and for all $x\in\R^n$

\[\sqrt{\mu}(f(x)-f^\ast)\leq\|\nabla f(x)\|\|x-x^\ast\|.\]
\end{definition}

It immediately follows from the Cauchy-Schwarz inequality that the weak PL inequality is weaker than $\alpha$-WQC .
\section{Gradient descent method}

One of the first questions arising whenever a new condition is proposed to replace convexity is whether it's sufficient to guarantee the convergence of the gradient descent method. Fortunately, this is the case with $\alpha$-WQC.  

\begin{algorithm}
	\caption{Gradient descent}
\begin{algorithmic}[1]
    \Require {$f$, $x_0$,$T$}
    \For{$k=0$ to $T-1$}
    \State $x_{k+1}\gets x_k-\frac{1}{L}\nabla f(x_k)$
  \EndFor
 \State \Return{$x_T$}
\end{algorithmic}
\end{algorithm}

It is well known that for a convex $L$-smooth objective $f$ the gradient descent method generates a sequence $\{x_k\}$ such that 

\[f(x_k)-f^\ast = O\left(\frac{LR^2}{k}\right),\] where $R=\|x_0-x^\ast\|$. We will now provide proof of a similar result for $\alpha$-WQC objectives.

\begin{theorem}
Let the objective function $f:\R^n\to\R$ be $L$-smooth and $\alpha$-WQC with respect to $x^\ast\in\sol$. Then the sequence $\{x_k\}$ generated by the gradient descent method satisfies

\[f(x_k)-f^\ast\leq \frac{LR^2}{\alpha (k+1)},\] where $R=\|x_0-x^\ast\|$.
\end{theorem}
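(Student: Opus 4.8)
The plan is to adapt the classical convergence proof for gradient descent on smooth convex functions, replacing the role played by convexity with the $\alpha$-WQC inequality. Two estimates will drive the argument: the standard descent lemma coming from $L$-smoothness, and a recursion for the squared distance to the optimum $x^\ast$. Writing $\delta_k = f(x_k)-f^\ast$ and $r_k = \|x_k - x^\ast\|$ for brevity, I expect the final bound to emerge by telescoping a one-step inequality and then invoking monotonicity of the function values.

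First I would establish the descent inequality. Substituting $x_{k+1}=x_k-\frac{1}{L}\nabla f(x_k)$ into the smoothness estimate $f(x_{k+1})\le f(x_k)+\langle\nabla f(x_k),x_{k+1}-x_k\rangle+\frac{L}{2}\|x_{k+1}-x_k\|^2$ yields $\frac{1}{2L}\|\nabla f(x_k)\|^2 \le \delta_k-\delta_{k+1}$; in particular $\{\delta_k\}$ is nonincreasing, which will be essential at the end. Next I would expand the distance recursion
\[
r_{k+1}^2 = r_k^2 - \tfrac{2}{L}\langle\nabla f(x_k),x_k-x^\ast\rangle + \tfrac{1}{L^2}\|\nabla f(x_k)\|^2 .
\]
Here is where $\alpha$-WQC enters: it bounds the cross term below by $\alpha\delta_k$, giving $r_{k+1}^2 \le r_k^2 - \frac{2\alpha}{L}\delta_k + \frac{1}{L^2}\|\nabla f(x_k)\|^2$. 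The key trick is to rewrite the gradient term as $\frac{2}{L}\cdot\frac{1}{2L}\|\nabla f(x_k)\|^2$ and absorb it via the descent inequality, turning it into the telescoping difference $\frac{2}{L}(\delta_k-\delta_{k+1})$. This produces the clean one-step bound $\frac{2\alpha}{L}\delta_k \le (r_k^2-r_{k+1}^2)+\frac{2}{L}(\delta_k-\delta_{k+1})$.

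Finally I would sum this from $k=0$ to $T$. Both bracketed terms telescope, leaving $\frac{2\alpha}{L}\sum_{k=0}^T\delta_k \le R^2 + \frac{2}{L}\delta_0$. The anticipated obstacle is precisely this leftover $\frac{2}{L}\delta_0$ term: because $\alpha$-WQC only yields the fraction $\alpha\delta_k$ in the cross term, the suboptimality contributions no longer cancel as cleanly as they do under full convexity (where $\alpha=1$ makes the $\delta_k$ terms vanish exactly). I would dispose of it by bounding $\delta_0 \le \frac{L}{2}R^2$, which follows from $L$-smoothness together with $\nabla f(x^\ast)=0$; this gives $\frac{2}{L}\delta_0 \le R^2$ and hence $\sum_{k=0}^T\delta_k \le \frac{LR^2}{\alpha}$. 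Since $\{\delta_k\}$ is nonincreasing, $(T+1)\delta_T \le \sum_{k=0}^T\delta_k$, and the claimed estimate $f(x_T)-f^\ast \le \frac{LR^2}{\alpha(T+1)}$ follows immediately.
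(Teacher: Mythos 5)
Your proposal is correct and follows essentially the same route as the paper's own proof: the descent lemma from $L$-smoothness, the expansion of $\|x_{k+1}-x^\ast\|^2$, the $\alpha$-WQC bound on the cross term, absorption of the gradient-norm term into the telescoping difference $\delta_k-\delta_{k+1}$, the bound $\delta_0\leq\frac{L}{2}R^2$ to dispose of the leftover term, and finally monotonicity of $\{\delta_k\}$ to convert the sum into $(T+1)\delta_T$. The only difference is cosmetic ordering of the algebra (you apply $\alpha$-WQC to the distance recursion before absorbing the gradient term, while the paper isolates the inner product first), so the two arguments are the same in substance.
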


\begin{proof}

Any $L$-smooth function $f$ satisfies the following inequality:
\[f(y)\leqslant f(x)+\langle\nabla f(x),y-x\rangle+\frac{L}{2}\|y-x\|^2\quad \forall x,y\in R^n.\] Setting $x=x_k, y=x_{k+1},$  we obtain \begin{equation}f(x_{k+1})-f(x_k)\leq-\frac{1}{2L}\|\nabla f(x_k)\|^2.\label{grad}\end{equation} This shows that the sequence $\{f(x_k)\}_{k=0}^{\infty}$ is non-increasing. On the other hand, we have \[\frac{1}{2}\|x_{k+1}-x^\ast\|^2=\frac{1}{2}\|x_k-x^\ast\|^2-\frac{1}{L}\langle\nabla f(x_k),x_k-x^\ast\rangle+\frac{1}{2L^2}\|\nabla f(x_k)\|^2.\] Taking into account this equality and the gradient descent guarantee \eqref{grad}, we get
\[\langle\nabla f(x_k),x_k-x^\ast\rangle\leq\frac{L}{2}\|x_{k}-x^\ast\|^2-\frac{L}{2}\|x_k-x^\ast\|^2+f(x_k)-f(x_{k+1}).\]

Further, denote $\eps_k=f(x_k)-f^\ast$. The above inequality combined with the definition of $\alpha$-WQC shows that
\[\eps_k\leq \frac{1}{\alpha}\langle\nabla f(x_k),x_k-x^\ast\rangle\leq\frac{1}{\alpha}\left[\frac{L}{2}\|x_k-x^\ast\|^2-\frac{L}{2}\|x_{k+1}-x^\ast\|^2+\eps_k-\eps_{k+1}\right].\] 

Summing it up for $k=0,\ldots, T$ results in
\[\sum_{k=0}^T\eps_k\leq\frac{1}{\alpha}\left[\frac{LR^2}{2}-\frac{L}{2}\|x_{T+1}-x^\ast\|^2+\eps_0-\eps_{T+1}\right]\leq\frac{1}{\alpha}\left[\frac{LR^2}{2}+\eps_0\right].\] By $L$-smoothness of $f$, we have $\eps_0\leq \frac{LR^2}{2}$. Since the sequence $\{\eps_k\}$ is non-increasing, we have \[(T+1)\eps_T\leq\frac{LR^2}{\alpha},\] which is exactly the statement of the theorem.

\end{proof}

\section{Subspace optimization}
In 2005 Guy Narkiss et al. \cite{SESOP} presented a first-order method with optimal (up to a multiplicative constant independent of the problem) convergence rate for smooth convex problems. In this section, we will demonstrate that this method retains its convergence rate for $\alpha$-WQC $L$-smooth functions. The proof of this fact only slightly differs from the original proof in \cite{SESOP}.

Let $D_k$ ($k\geq 1$) be an $n\times 3$-matrix ($n$ is the dimensionality of the objective's domain), the columns of which are the following vectors:
$$d_k^0=\nabla f(x_k),$$
$$d_k^1=x_k-x_0,$$
$$d_k^2=\sum\limits_{i=0}^{k}\omega_i\nabla f(x_i),$$
where
\begin{align*}
\omega_i=&
\begin{cases}
1,& i=0,\\
\frac{1}{2}+\sqrt{\frac{1}{4}+\omega^2_{i-1}},& i>0.
\end{cases}
\end{align*}

These matrices will determine the subspaces over which we will minimize our objective. With $D_k$ defined this way, the algorithm takes the following form:

\begin{algorithm}[ht]
	\caption{SESOP($f$, $x_0$, $T$)}
 \begin{algorithmic}[1]     
    \Require{The objective function $f$, initial point $x_0$, number of iterations $T$}
    \For{$k=0$ to $T-1$}
    \State $\tau_k\gets\argmin\limits_{\tau\in\mathbb{R}^3} f(x_k+D_k\tau)$
  	\State $x_{k+1}\gets x_k+D_k\tau_k$
  \EndFor
 \State \Return{$x_T$}
 \end{algorithmic}
\end{algorithm}

\begin{theorem}\label{aSESOP}
Let the objective function $f$ be $L$-smooth and $\alpha$-WQC with respect to $x^\ast\in\sol$. Then the sequence $\{x_k\}_k$ generated by the SESOP method satisfies
\[f(x_{k})-f^\ast\leqslant\ \frac{2LR^2}{\alpha^2k^2}, \]
where $R=\|x^\ast-x_0\|.$

\end{theorem}
\begin{proof}

Since $\nabla f(x_k)$ belongs to the set of directions generated by $D_k$, we can use the following guarantee of gradient descent with fixed step length for $L$-smooth functions:
\begin{equation}f(x_{k+1})=\min_{s\in\mathbb{R}^3} f(x_k+D_ks)\leqslant f\left(x_k-\frac{1}{L}\nabla f(x_k)\right)\leqslant f(x_k)-\frac{\|\nabla f(x_k)\|^2}{2L}.\label{sesop-grad} \end{equation} The definition of $\alpha$-WQC may be rewritten as follows:
\begin{equation}
f(x_k)-f^\ast\leqslant\frac{1}{\alpha}\langle\nabla f(x_k),x_k-x^\ast\rangle.\label{wqc}
\end{equation}

By the construction of $x_k$, we have that $x_k$ is a minimizer of $f$ on the subspace containing the directions $x_k-x_{k-1}$ and $x_{k-1}-x_0$. It means that $\nabla f(x_k)\ \bot\ x_k-x_0$, which in turn allows us to write the following inequality instead of \eqref{wqc}:
\[f(x_k)-f^\ast\leqslant\frac{1}{\alpha}\langle\nabla f(x_k),x_0-x^\ast\rangle,\] 
Take a weighted sum over $k=0,\ldots,T-1$ for some $T\in\mathbb{N}$ with weights $\omega_k$ defined above.
\begin{equation}\sum\limits_{k=0}^{T-1} \omega_k(f(x_k)-f^\ast)\leqslant \frac{1}{\alpha}\left\langle\sum\limits_{k=0}^{T-1}\omega_k\nabla f(x_k),x_0-x^\ast\right\rangle\leqslant\frac{1}{\alpha}\left\|\sum\limits_{k=0}^{T-1}\omega_k\nabla f(x_k)\right\|R.\label{weighted-wqc}\end{equation}

Since $x_k$ is also a minimizer on the subspace containing $x_{k-1}+\sum\limits_{k=0}^{k-1}\omega_k\nabla f(x_k)$, we have that $\nabla f(x_k)\ \bot\ \sum\limits_{k=0}^{k-1}\omega_k\nabla f(x_k)$. Using \eqref{sesop-grad} and the Pythagorean theorem, we get
\[\left\|\sum\limits_{k=0}^{T-1}\omega_k\nabla f(x_k)\right\|^2=\sum\limits_{k=0}^{T-1}\omega_k^2\|\nabla f(x_k)\|^2\leqslant 2L\sum\limits_{k=0}^{T-1}\omega_k^2(f(x_k)-f(x_{k+1})). \]

Note that our choice of $\omega_k$ is equivalent to choosing the greatest $\omega_k$ satisfying
\begin{align*}
\omega_k=&
\begin{cases}
1,& k=0\\
\omega^2_k-\omega^2_{k-1},& k>0.
\end{cases}
\end{align*} Returning to \eqref{weighted-wqc} and denoting $\eps_k=f(x_k)-f^\ast$, we get
\begin{align*}
S&=\sum\limits_{k=0}^{T-1}\omega_k\eps_k\leqslant\left(\frac{2LR^2}{\alpha^2}\sum\limits_{k=0}^{T-1}\omega_k^2(\eps_k-\eps_{k+1})\right)^{-1/2}=\\&=\sqrt{\frac{2LR^2}{\alpha^2}}\sqrt{\eps_0\omega_0^2+\eps_1(\omega^2_1-\omega^2_0)+\ldots+\eps_{T-1}(\omega^2_{T-1}-\omega^2_{T-2})-\eps_{T}\omega_{T-1}^2}=\\&
=\sqrt{\frac{2LR^2}{\alpha^2}}\sqrt{\eps_0\omega_0^2+\sum\limits_{k=1}^{T-1} \epsilon_k \omega_k -\eps_{T}\omega_{T-1}^2}=\\&
=\sqrt{\frac{2LR^2}{\alpha^2}}\sqrt{S-\eps_{T}\omega_{T-1}^2}.
\end{align*}

Rewriting that, we get
\begin{equation}
    \omega_{T-1}^2\eps_{T}\leqslant S-\frac{\alpha^2S^2}{2LR^2}. \label{errorT}
\end{equation}

Note that $\omega_0=1$ and for $k\geq 1$ inequality $\omega_k=\frac{1}{2}+\sqrt{\frac{1}{4}+\omega_{k-1}^2}\geq \frac{1}{2}+\omega_{k-1}$ holds. Consequently, by induction, we obtain $\omega_k\geq\frac{k+1}{2}$. Maximizing the right-hand side of \eqref{errorT} over $S$ and using the fact that $\omega_k\geqslant \frac{k+1}{2}$, we obtain

\[\eps_{T}\leqslant\frac{2LR^2}{\alpha^2T^2}. \]

\end{proof}

\section{Nemirovski's conjugate gradients method}
Consider a quadratic minimization problem \[\phi(x)=\frac{1}{2}\langle x,Ax\rangle+\langle b,x\rangle\to \min_{x\in\R^n},\] where $A\in\R^{n\times n},\ b\in\R^n$, $\mu\|x\|\leq \|Ax\|\leq L\|x\|$. The last pair of inequalities means that $\phi(x)$ is $L$-smooth and strongly convex. Denote by $x^\ast$ the solution to this problem, by $x_0$ the initial point and by $x_k$ the points generated by the conjugate gradients method. In \cite{Nemirovski} the optimal convergence rate of the conjugate gradients method for quadratic objectives was attributed to its following five properties.
\begin{enumerate}
    \item $\phi(x_{k+1})\leq\phi(x_k)-\frac{1}{2L}\nabla\phi(x_k)$;
    \item $\nabla\phi(x_k)\perp\sum\limits_{i=0}^{k-1}\nabla\phi(x_i)$;
    \item $\nabla\phi(x_k)\perp x_k-x_0$;
    \item $\langle\nabla\phi(x_k),x^\ast-x_k\rangle\leq \phi(x^\ast)-\phi(x_k)$;
    \item $\phi(x_0)-\phi(x^\ast)\geq \frac{\mu}{2}\|x_0-x^\ast\|^2$.
\end{enumerate}

These five properties are enough to derive the convergence rate of the conjugate gradients method. A method of strongly convex optimization was then constructed to possess the same five properties.

\begin{algorithm}
	\caption{CG(f, $x_0$, $T$)}
 \begin{algorithmic}[1]
    \Require{The objective function $f$, initial point $x_0$, number of iterations $T$}
    \State $q_0\gets 0$
    \For{$k=0$ to $T-1$}
    \State $E_k\gets x_0+\text{Lin}\{x_k-x_0, q_k\}$
    \State $\hat{x}_k\gets\argmin\limits_{x\in{E_k}} f(x)$
    \State $x_{k+1}\gets\hat{x}_k-\frac{1}{L}\nabla f(\hat{x}_k)$
    \State $q_{k+1}\gets q_k+\nabla f(\hat{x}_k)$
  \EndFor
 \State \Return{} $x_T$
 \end{algorithmic}
 \end{algorithm}

The gradient descent step in this algorithm leads to the first property for the sequence $\hat{x}_k$. The next two are guaranteed by the subspace minimization step. However, the final two properties are direct consequences of the strong convexity of the objective. Now note that these 2 properties are practically the very definitions of 1-WQC and quadratic growth. This suggests that this algorithm can be generalized to the $\alpha$-WQC setting.
\begin{theorem}
\label{CG_theorem}
Let  $f$ be an $L$-smooth and $\alpha$-WQC with respect to $P(x_0)$ (the projection of $x_0$ onto $\sol$) function satisfying the quadratic growth condition with constant $\mu>0$. Then CG($f$, $x_0$, $T$) returns $x_T$ such that
\[f(x_{T})-f^\ast\leqslant\frac{3}{4}(f(x_0)-f^\ast),\] where \[T=\left\lceil\frac{4}{3\alpha}\sqrt{\frac{L}{\mu}}\ \right\rceil.\]
\end{theorem}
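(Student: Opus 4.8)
The plan is to mirror the subspace-optimization argument of the previous theorem, but to exploit the quadratic growth condition to turn the SESOP-type estimate into a contraction of the objective gap by a constant factor. Write $x^\ast = P(x_0)$, $\eps_k = f(\hat{x}_k) - f^\ast$, $R = \|x_0 - x^\ast\|$, and let $\Delta = f(x_T) - f^\ast$ denote the returned gap. First I would record the two facts that come for free. Since $E_0 = \{x_0\}$ we have $\hat{x}_0 = x_0$, hence $\eps_0 = f(x_0) - f^\ast$; and since $\hat{x}_k$ minimizes $f$ over the affine subspace $E_k$, its gradient is orthogonal to the whole tangent space $\mathrm{Lin}\{x_k - x_0,\, q_k\}$. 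In particular $\nabla f(\hat{x}_k)\perp q_k$ and, because $\hat{x}_k - x_0 \in \mathrm{Lin}\{x_k - x_0,\, q_k\}$, also $\nabla f(\hat{x}_k)\perp(\hat{x}_k - x_0)$.

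The orthogonality to $\hat{x}_k - x_0$ lets me rewrite $\alpha$-WQC with respect to $x^\ast$ as $\alpha\eps_k \leq \langle\nabla f(\hat{x}_k),\,\hat{x}_k - x^\ast\rangle = \langle\nabla f(\hat{x}_k),\, x_0 - x^\ast\rangle$, exactly as in the SESOP proof. Summing over $k = 0,\ldots,T-1$, using $q_T = \sum_{k=0}^{T-1}\nabla f(\hat{x}_k)$ and Cauchy--Schwarz, I get $\alpha S \leq \langle q_T,\, x_0 - x^\ast\rangle \leq \|q_T\|R$, where $S = \sum_{k=0}^{T-1}\eps_k$. The orthogonality to $q_k$ then yields, via the Pythagorean theorem applied to $q_{k+1} = q_k + \nabla f(\hat{x}_k)$, the identity $\|q_T\|^2 = \sum_{k=0}^{T-1}\|\nabla f(\hat{x}_k)\|^2$.

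Next I would bound this sum by telescoping. The smoothness guarantee used earlier gives $\|\nabla f(\hat{x}_k)\|^2 \leq 2L(f(\hat{x}_k) - f(x_{k+1}))$, and since $x_{k+1}\in E_{k+1}$ we have $f(x_{k+1})\geq f(\hat{x}_{k+1})$, so $\{f(\hat{x}_k)\}$ is nonincreasing and the sum telescopes. Keeping the last term honest ($f(x_T) = f^\ast + \Delta$) produces $\|q_T\|^2 \leq 2L(\eps_0 - \Delta)$. Combining with $\alpha S \leq \|q_T\|R$, squaring, and using monotonicity in the form $S \geq T\eps_{T-1}\geq T\Delta$, I arrive at $\alpha^2 T^2\Delta^2 \leq 2LR^2(\eps_0 - \Delta)$. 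Finally I invoke quadratic growth as $R^2 \leq \tfrac{2}{\mu}\eps_0$ to obtain $\alpha^2 T^2\Delta^2 \leq \tfrac{4L}{\mu}\eps_0(\eps_0 - \Delta)$.

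The crux, and the step I expect to be delicate, is the last one: dividing by $\eps_0^2$ and setting $r = \Delta/\eps_0 \in [0,1]$ turns everything into the scalar inequality $\alpha^2 T^2 r^2 \leq \tfrac{4L}{\mu}(1 - r)$. Its left side is increasing in $r$, so the admissible $r$ are bounded by the unique root of the associated quadratic, and it suffices to verify that at $r = \tfrac34$ the inequality holds with equality precisely at the stated $T$. Substituting $r = \tfrac34$ gives $\tfrac{9}{16}\alpha^2 T^2 \geq \tfrac{L}{\mu}$, i.e. $T \geq \tfrac{4}{3\alpha}\sqrt{L/\mu}$, so the choice $T = \lceil \tfrac{4}{3\alpha}\sqrt{L/\mu}\rceil$ forces $r \leq \tfrac34$, which is the claim. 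The essential point is that retaining the factor $(\eps_0 - \Delta)$ rather than crudely replacing it by $\eps_0$ is exactly what yields the constant $\tfrac34$ with this $T$; discarding it would inflate the iteration count by a spurious factor of two.
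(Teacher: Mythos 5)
Your proof is correct, and it is built from the same ingredients as the paper's: the orthogonality relations $\nabla f(\hat{x}_k)\perp(\hat{x}_k-x_0)$ and $\nabla f(\hat{x}_k)\perp q_k$ coming from the subspace minimization, the Pythagorean identity $\|q_T\|^2=\sum_{k=0}^{T-1}\|\nabla f(\hat{x}_k)\|^2$, the shifted WQC inequality $\alpha(f(\hat{x}_k)-f^\ast)\leq\langle\nabla f(\hat{x}_k),x_0-x^\ast\rangle$, the telescoped gradient-step bound, and the quadratic-growth estimate $R^2\leq\frac{2}{\mu}\eps_0$. Where you differ is the logical structure. The paper argues by contradiction: it assumes $f(x_T)-f^\ast>\frac{3}{4}\eps_0$, uses that hypothesis to bound every term crudely (each $f(\hat{x}_k)-f^\ast>\frac{3}{4}\eps_0$ in the WQC sum, and $\eps_0-\eps_T<\frac{1}{4}\eps_0$ in the telescoped sum), and derives $T<\frac{4}{3\alpha}\sqrt{L/\mu}$, contradicting the choice of $T$. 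You instead run the argument forward, retaining the exact quantities $S\geq T\Delta$ and $\eps_0-\Delta$, and close with the scalar inequality $\alpha^2T^2r^2\leq\frac{4L}{\mu}(1-r)$ for $r=\Delta/\eps_0$, whose strictly monotone left and right sides force $r\leq\frac{3}{4}$ as soon as $T\geq\frac{4}{3\alpha}\sqrt{L/\mu}$. The two routes require the same threshold on $T$, so neither wins on constants, but your direct version is slightly more informative: it yields a contraction factor $r^\ast(T)$ (the positive root of the quadratic) for \emph{every} $T$, not only the specific ceiling value, and it avoids the bookkeeping of the contradiction hypothesis (the paper's version even carries an index slip, writing $\eps_{T+1}$ where $\eps_T$ is meant). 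Two minor points to make explicit: dispose of the degenerate case $\eps_0=0$ (trivial, since the method never increases $f$) before dividing by $\eps_0^2$, and note that $\Delta\geq 0$, which justifies squaring $\alpha T\Delta\leq\|q_T\|R$ and restricting the scalar analysis to $r\geq 0$.
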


\begin{proof}
Denote $x^\ast=P(x_0)$. Assume $\eps_{T}>\frac{3}{4}\eps_0$, which also implies $\eps_k>\frac{3}{4}\eps_0$ for $k=1,\ldots, T$. Note, that we have the following inequality for the points $\{x\}_{j=0}^T$ generated by SESEOP:
\[f(x_0)\geq f(x_1)\geq f(\hat{x}_1) \geq f(x_2) \geq \cdots \geq f(x_{T}).\] Therefore, our assumption implies that $\eps_0\neq 0$.

The gradient descent guarantee
\[f(x_{k+1})\leqslant f(\hat{x}_k)-\frac{\|\nabla f(\hat{x}_k)\|^2}{2L} \] leads us to 
\begin{equation}
\|\nabla f(\hat{x}_k)\|^2\leqslant 2L(f(\hat{x}_k)-f(x_{k+1}))\leqslant 2L(f(x_k)-f(x_{k+1})).\label{cg-grad-norm}
\end{equation} Telescoping \eqref{cg-grad-norm} for $k=0,\ldots,T-1$, we obtain
\begin{equation}
\sum\limits_{k=0}^{T-1}\|\nabla f(\hat{x}_k)\|^2\leqslant 2L(\eps_0-\eps_{T+1})\leqslant\frac{L}{2}\eps_0.\label{cg-grad-norm-sum}
\end{equation}

By the definition of $\hat{x}_k$, $\nabla f(\hat{x}_k)\perp \hat{x}_k-x_0$. This allows us to use $\alpha$-WQC in the following way:
\begin{equation}
\langle\nabla f(\hat{x}_k),x^\ast-x_0\rangle=\langle\nabla f(\hat{x}_k),x^\ast-\hat{x}_k\rangle\leqslant \alpha (f^\ast-f(\hat{x}_k))\leqslant -\frac{3\alpha}{4}\eps_0.\label{cg-wqc-error}
\end{equation} 

Now telescoping \eqref{cg-wqc-error} for $k=0,\ldots, T-1$ and using the Cauchy-Schwarz inequality, one gets
\[-\|q_{T}\| \| x^*-x_0\|\leqslant\langle q_{T}, x^*-x_0\rangle< -\frac{3T\alpha}{4}\eps_0.\]
This inequality will allow us to obtain an upper bound on $T$, which contradicts the theorem's statement. All that remains is to get upper bounds on $\|q_{T}\|$ and $x^\ast-x_0$.

Again, by definition of $\hat{x}_k$, $\nabla f(\hat{x}_{k})\perp q_k$. By the Pythagorean theorem and \eqref{cg-wqc-error},
\[\|q_{T}\|=\left(\sum\limits_{k=0}^{T-1}\|\nabla f(\hat{x}_k)\|^2\right)^{\frac{1}{2}}\leqslant\sqrt{\frac{L}{2}\eps_0}.\]

Quadratic growth, on the other hand, implies the following upper bound
\[\|x_0-x^*\|\leqslant \sqrt{\frac{2}{\mu}\eps_0}.\]

Finally,
\[-\sqrt{\frac{2}{\mu}\eps_0}\sqrt{\frac{L}{2}\eps_0}<-\frac{3T\alpha}{4}\eps_0,\] or
\[T<\frac{4}{3\alpha}\sqrt{\frac{L}{\mu}}.\] This contradicts our choice of $T=\left\lceil\frac{4}{3\alpha}\sqrt{\frac{L}{\mu}}\ \right\rceil$.
\end{proof}

This result shows that if $f$ were $\alpha$-WQC with respect to $P(x)$ $\forall x\in \mathbb{R}^n$, we would be able to apply a restarting technique to this method. To be more precise, under such circumstances it is possible to achieve an accuracy of $\eps$ by performing $\log_{\frac{4}{3}}\eps$ cycles of $\left\lceil\frac{4}{3\alpha}\sqrt{\frac{L}{\mu}}\ \right\rceil$ iterations and using the output of each cycle as input for the next one. This means that by using Nemirovski's conjugate gradients method we may get a point $y$ such that
$f(y)-f^\ast\leq\eps$ in $O\left(\frac{1}{\alpha}\sqrt{\frac{L}{\mu}}\log{\frac{1}{\eps}}\right)$ iterations.

 {
Note, that many 
problems can be reduced to the system on nonlinear equations
$g(x) = 0.$
This system can be reduced to typically non-convex optimization problem
$\min_{x\in\R^n}\left[f(x):=\frac{1}{2}\|g(x)\|^2 \right].$
If 
$\lambda_{\min}\left(\frac{\partial g}{\partial x} \left(\frac{\partial g}{\partial x}\right)^T \right)\ge \mu,$
i.e. (see Definition~\ref{PL-cond})
$$f(x)-f^* = f(x) \le\frac{1}{2\mu}\|\nabla f(x)\|^2,$$
it's well known \cite{nesterov2006cubic,PL,gasnikov2017universal} that under additional smoothness assumptions standard non-accelerated iterative methods (Gradient Descent, Cubic Regularized Newton method etc.) converge as if $f$ to be $\mu$-strongly convex function. For accelerated methods such results are not known. So we was motivated to find such additional sufficient conditions that guarantee convergence for properly chosen accelerated methods. In this section we observe that such a condition could be $\alpha$-weakly-quasi-convexity of $f$ (see Definition  \ref{Def_alpha}).
}

\section{Numerical Experiments}

In this section, we present the results of numerical experiments for the SESOP and the CG methods. In this paper we've proved that these methods converge for weakly-quasi convex functions. But their practical efficiency could be demonstrated also for (strongly) convex problems. In different numerical experiments, we observe that these methods converges significantly faster than accelerated algorithms without line-search. So one of the reason is related with spectral properties of Hessian $\nabla^2 f(x^*)$: different accelerated line-search methods (CG-type methods) have this property \cite{gasnikov2017universal}. But we've observed that the SESOP algorithm (due to subspace minimization) does not require restarts (like CG-type methods) for strongly convex problems. To the best of our knowledge, the SESOP was not applied earlier for strongly convex problems. We've observed that the SESOP is the first such method that 1) is fully adaptive (doesn't require any input information); 2) allows CG-type acceleration related with  $\nabla f(x^*)$ properties; 3) works for both convex problems and strongly convex problems (without restarts). 


It is known that non-accelerated methods (such as the Steepest descent) converge for strong convex methods with speed $f(x_k)-f^*\leq C q^k$ for $q = \frac{\kappa-1}{\kappa+1}$ depending on conditional number $\kappa = L/\mu$ and don't require any input information (such as Lipschitz constant $L$ or constant of strong convexity $\mu$). At the same time, accelerated methods (such as Nesterov method \cite{d2021acceleration}) converge with speed $f(x_k)-f^*\leq C q_{acc}^k$ for $q_{acc}=\frac{\sqrt{\kappa}-1}{\sqrt{\kappa}+1}$, but they require at least $\mu$ as an input (or instead of $\mu$ another additional information, like $f^*$ \cite{gasnikov2017universal,barre2020complexity}).    
So, our purpose is to compare the SESOP method with the accelerated methods that require the parameter $\mu$ as an input.

Note, that for quadratic problems standard  the Conjugate gradient (CG) method \cite{d2021acceleration} doesn't require $\mu$ as an input. And so for quadratic problem we don't have advantages of using the SESOP instead of the CG. That is why for our experiments we've chosen the following non-quadratic function popular in many applications \cite{anderson1992discrete,peyre2019computational} (soft-max):
\begin{equation}
    \label{lse}
    f(x)=\log\left(\sum_{i=1}^m\exp(\langle a_i, x\rangle)\right)+\frac{\mu}{2}\|x\|^2.
\end{equation}
It is a $\mu$-strongly convex function with $L=\left(\lambda_{\max}\left(AA^\top \right)+\mu\right)$-Lipschitz gradient constant. 
Moreover, we have conditional number $\kappa = \frac{L}{\mu}=\frac{\lambda_{\max}\left(AA^\top \right)}{\mu}+1$ that approaches to 1 when $\mu\rightarrow \infty$. We will vary parameter $\mu$ in our experiments.

In all our experiments the dimension $n=300$,  parameter $m=200$. Components of the matrix $A$ are generated one time independently of a standard normal distribution. The subproblems for the SESOP and the CG on each iteration were being solved through the CVXPY package \cite{diamond2016cvxpy} with accuracy $10^{-7}$.

\begin{figure}[ht]  
\centering
\includegraphics[width=1\linewidth]{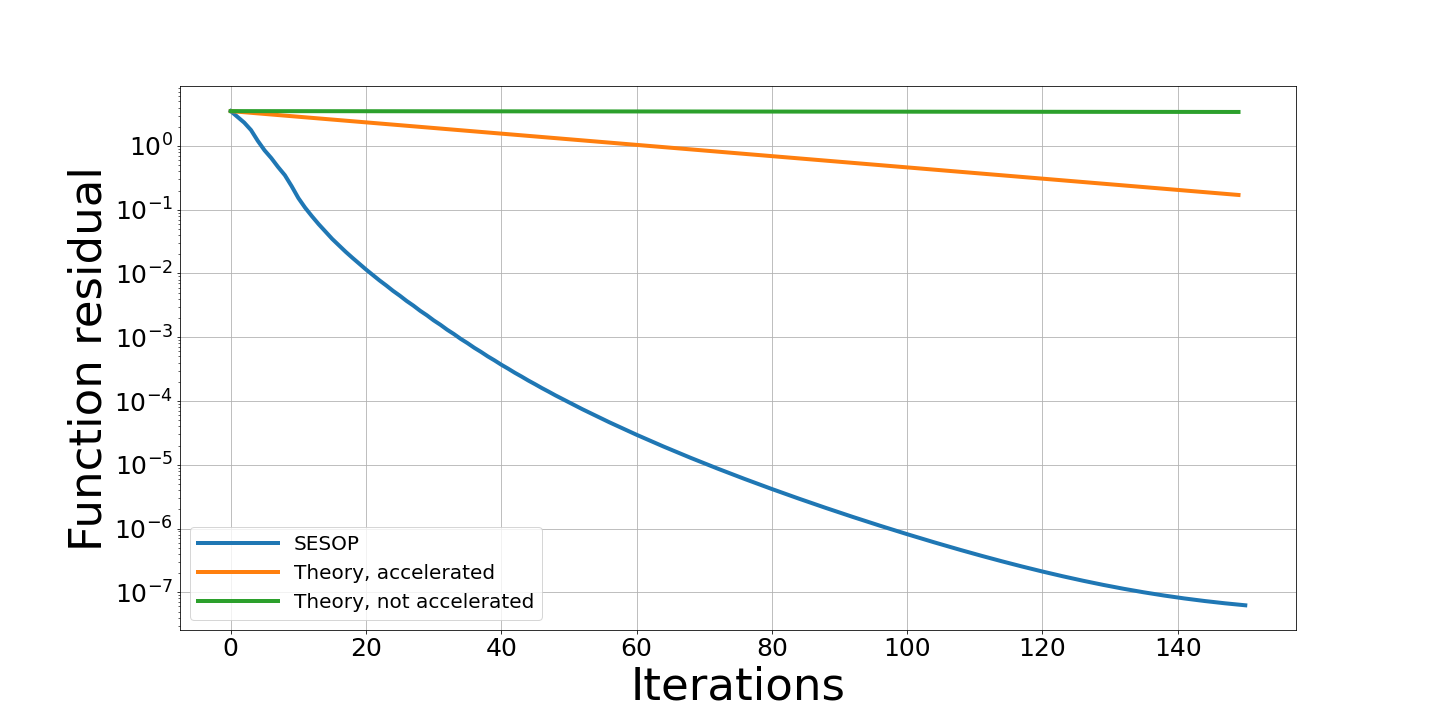} 
\caption{Comparison of convergence speed for the SESOP method and theoretical estimates for accelerated and non-accelerated methods. The parameter $l$ equals $0.1$.} \label{fig:grad_exp_lse}
\end{figure}

Firstly, we want to compare the practice speed of the SESOP method with estimates $Cq^k$ and $Cq_{acc}^k$. On the Figure  \ref{fig:grad_exp_lse} we show this comparison. The green line is non-accelerated theoretical speed $(f(x_0)-f^*)q^k$, the orange is accelerated $(f(x_0)-f^*)q_{acc}^k$. We can see that the SESOP method is significantly better than the theoretical speed for accelerated methods. Let's compare these speeds for different parameters $\mu$ and different conditional numbers correspondingly.

\begin{figure}[ht]  
\centering
\includegraphics[width=1\linewidth]{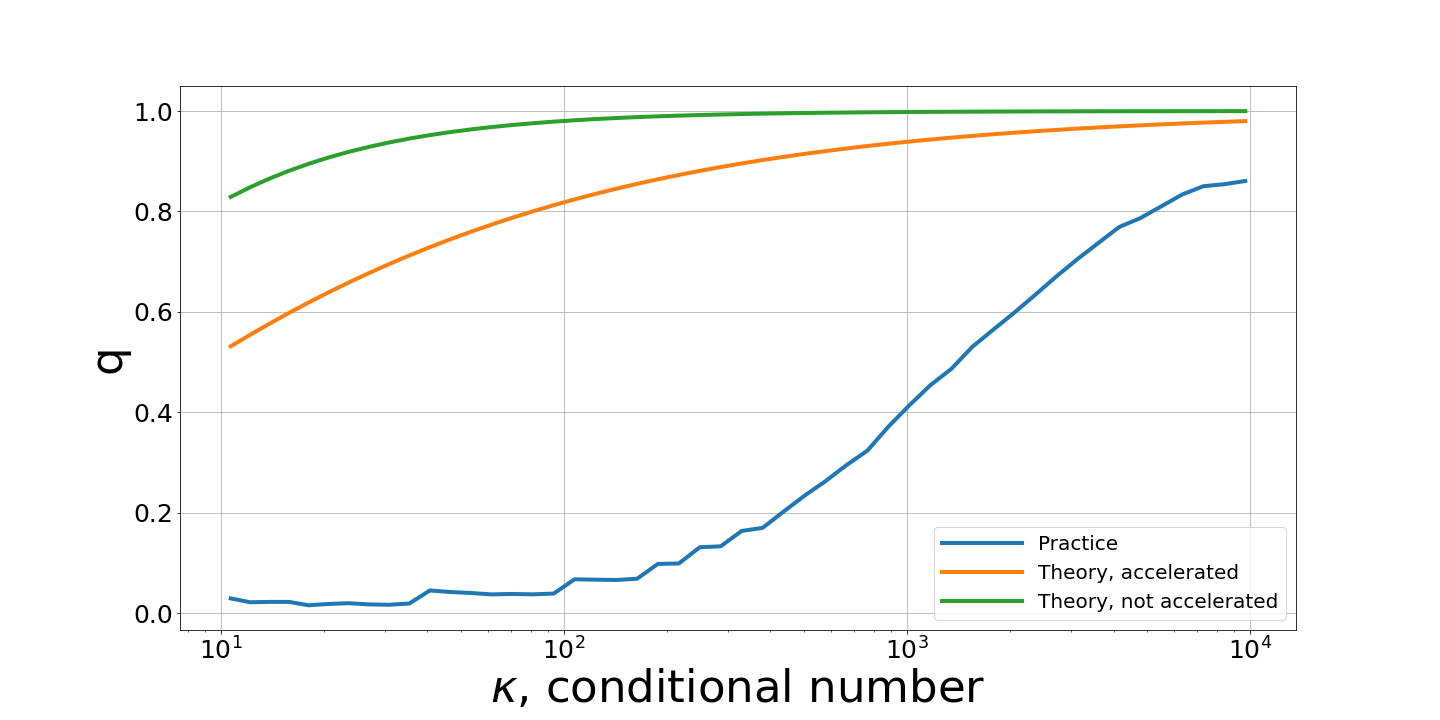} 
\caption{Comparison of $q$, $q_{acc}$ and $q_{practice}$ for the SESOP method.} \label{fig:grad_exp_qp}
\end{figure}

We will stop the SESOP method after approaching function value accuracy $10^{-8}$ and will compare parameter  $q_{practice}=\left(\frac{f_k-f^*}{f_0-f^*}\right)^{\frac{1}{k}}$, where $f_k = f(x_k)$, with $q$ and $q_{acc}$ for different conditional number. The results are presented in Figure \ref{fig:grad_exp_qp}. We can see that $q_{practice}$ is significantly less than $q_{acc}$ for all conditional numbers. It means that the SESOP method converges for this function significantly faster than the theoretical speed for accelerated methods for all conditional numbers. Moreover, the significant gain we can see for the small conditional numbers that are less than $10^2$.

Finally, we want to compare the SESOP method with proposed in this article the CG method (with and without restarts) and Nesterov method in the following form \cite{bubeck2015convex}:
$$x_{k+1}=y_k-\frac{1}{L}\nabla f(y_k),$$
$$y_{k+1}=x_{k+1}+\frac{\sqrt{L}-\sqrt{\mu}}{\sqrt{L}+\sqrt{\mu}}(x_{k+1}-x_k).$$

\begin{figure}[ht]  
\centering
\includegraphics[width=1\linewidth]{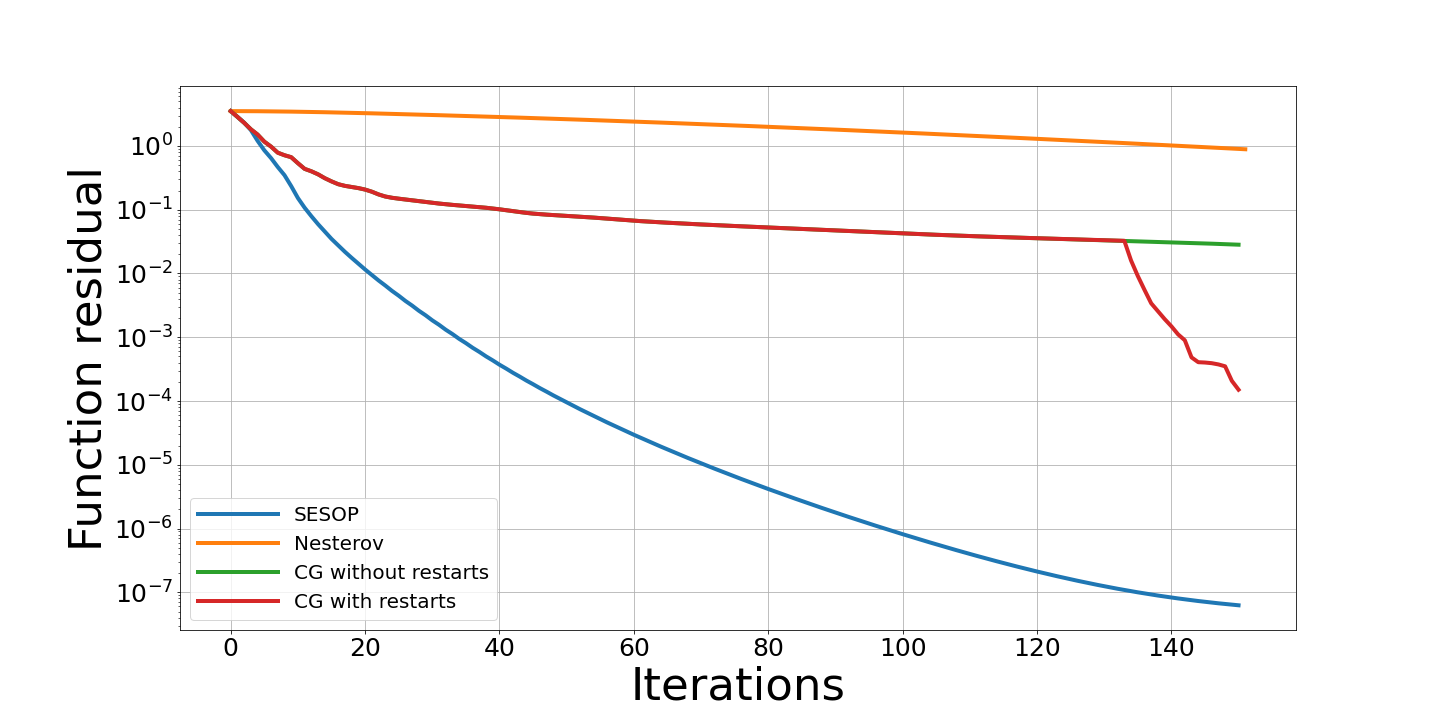}  
\caption{Comparison of the SESOP method, CG method with optimal restart from Theorem \ref{CG_theorem} and without restarts and Nesterov method. The conditional number  $\kappa\sim 10^4$} 
\label{fig:grad_exp_lse_nest1} 
\end{figure}

The results of this comparison for different conditional number is presented in Figure \ref{fig:grad_exp_lse_nest1} and \ref{fig:grad_exp_lse_nest2}. We can see that the SESOP method is better than other methods in the both cases.

\begin{figure}[ht]  
\centering
\includegraphics[width=1\linewidth]{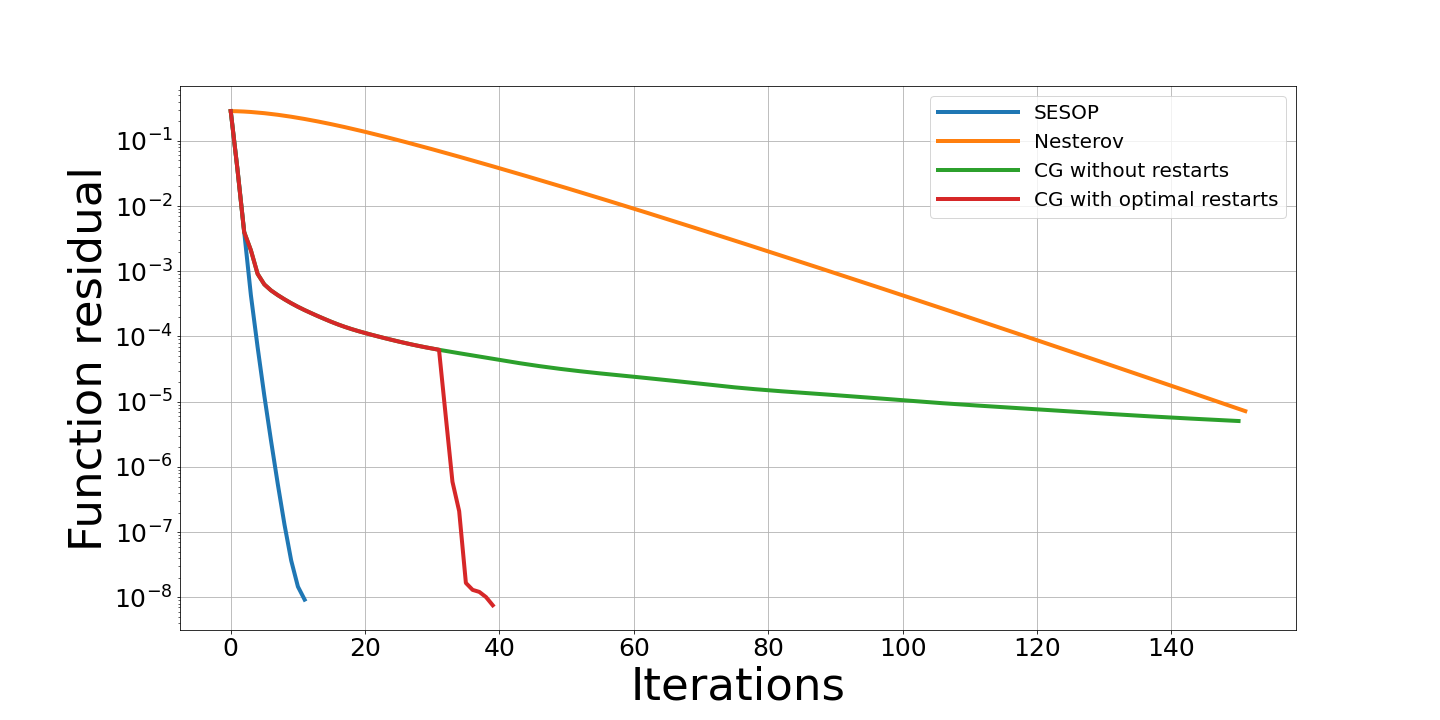}  
\caption{Comparison of the SESOP method, CG method with optimal restart from Theorem \ref{CG_theorem} and without restarts and Nesterov method. The conditional number $\kappa\sim 400$} 
\label{fig:grad_exp_lse_nest2} 
\end{figure}

Moreover, in Figure \ref{fig:grad_exp_lse_nest1} we can see that the SESOP method converges significantly better than other accelerated methods. So, it approaches accuracy $10^{-7}$ after 150 iterations when the nearest method (CG with restarts) approaches only $10^{-4}$. Also, we can see that the SESOP method and CG converges significantly faster for high conditional number (see Figure \ref{fig:grad_exp_lse_nest2}).

Besides, the CG method without restarts works significantly worse than CG with restarts in both cases.

Further, we compare time of the considered methods. Again, we take the problem of minimization \eqref{lse} and compare two methods. The first is Nesterov method with proven linear convergence rate for strong convex function. The second is researched in this article SESOP method. We demonstrate that it can achieve convergence rate in sense of iteration in practice as accelerated method. Nevertheless, it requires solving a low-dimensional problem on each iteration. Further, we demonstrate that SESOP method can demonstrate the same or better time than accelerated methods for some problems with specific structure .

We will use the Ellipsoid method for internal problem. Note, that this method can achieve accuracy $\varepsilon$ through $O\left(\log \frac{1}{\varepsilon}\right)$ operations. Besides, in the work \cite{Stonyakin_SESOP} it was shown that SESOP method is robust to inaccurate solution of internal problem on each iteration.

Ellipsoid method needs gradient at a point $\tau\in \mathbb{R}^3$ on each iteration. The gradient is 3-dimensional vector, and it can be calculated efficiently in some special cases. In particular, in the problem  the problem \eqref{lse} we have $\nabla_{\tau} f(x+D\tau)=(AD)^\top \text{softmax}(Ax+AD \tau) + \mu (D^\top x + D^\top D \tau), $ where $\text{softmax}(x)=\frac{\exp(x_i)}{\sum\limits_{j=1}^n \exp(x_j)}$. If vectors $Ax, D^\top x$ and matrices $D^\top D, AD$ then the complexity of one gradient with respect to $\tau$ is $O(m).$ In this case the complexity of each iteration in SESOP method is $O\left(mn+m\log \frac{1}{\varepsilon}\right)$. In the case, when required accuracy is not high ($\log \frac{1}{\varepsilon}\ll n$) it is near to the complexity of one gradient calculation with respect to $x$ - $O(mn)$.

\begin{table}[h]
    \centering
    \begin{tabular}{|c|c|c|}
    \hline
         m&  Nesterov & SESOP\\
    \hline
         10 & 3.1 & 0.7\\
         100 & 3.8& 3.1\\
         1000 & 18.7& 19.4\\
    \hline
    \end{tabular}
    \caption{Comparison of time (s) for Nesterov method and SESOP with Ellipsoid method.}
    \label{tab:results}
\end{table}
In the table \ref{tab:results}, we can see comparison  \eqref{lse} for different $m$ of Nesterov's method and SESOP with Ellipsoid method for internal problems. We run this method for several values $m$. The stopping criterion for all methods is $\|\nabla f(x)\|\leq 10^{-7}$.  We can see that SESOP method has better performance when gradient with respect to $\tau$ can be computed significantly faster than gradient with respect to $x$ in initial space. Moreover, the considered method does not require additional information about parameters of strong convexity or smoothness, unlike Nesterov method.

\section{Discussion}

Even though the SESOP and the CG methods presented above are optimal in terms of the amount of iterations required to achieve the desired accuracy, each iteration involves solving a subproblem over $\mathbb{R}^2$ or $\mathbb{R}^3$. However, since all the conditions replacing convexity and strong convexity in our paper involved some global minimizer $x^\ast$, which may not belong to the domain of any of these subproblems, they may be considered to be general non-convex optimization problems. Not only are such problems much more difficult than convex ones, the above convergence analyses relied on these subproblems to be solved exactly. This means that these theoretically optimal procedures can not be efficiently implemented directly. 

 {The first draft of this paper motivated two different collectives to improve 
the SESOP, that required to solve difficult subproblems. In the paper \cite{bu2020note} authors at each iteration have to solve a subproblem over $\mathbb{R}^1$. In the paper \cite{hinder2019near} authors at each iteration have an exact solution of a subproblem. Moreover, in \cite{hinder2019near} it was shown that our estimate from Theorem~\ref{aSESOP} is tight (i.e. corresponds to the lower bound from \cite{hinder2019near}). So these papers (joint with our paper) developed optimal (up to a numerical constant) methods in terms of gradient oracle calls for the class of $\alpha$-WQC target functions $f$, see Definition \ref{Def_alpha}.}

 {Note, that the considered in this paper classes of non-convex functions that can be optimized almost as convex ones. The searching of such classes of functions is one of the most popular direction in modern non-convex optimization, see recent survey \cite{danilova2020recent} and references there in. Here we just mention additional several classes: all local minima are good \cite{ge2016matrix}, class of function that can be considered as noisy convex quadratic functions \cite{bazarova2020linearly}.  Note also, that the class of unimodal functions (functions that has unique extremum that is global minima) in terms of convergence in function value is far from the class of convex function \cite{gasnikov2017universal}.}

Finally, let us discuss application of such methods for distributed optimization. The distributed optimization problem $\min\limits_{x} \sum\limits_{i=1}^m f_i(x)$ where functions $f_i$ are written on $m$ different nodes, can be reformulated as $\min\limits_{X: WX=0} \sum\limits_{i=1}^m f_i(x_i).$ For this problem we can construct the dual problem $\min\limits_{Y} \left[F^*(WY):=\sum\limits_{i=1}^m f_i^*\left([WY]_i\right)\right].$ We can apply SESOP method for this problem. In this case, each iteration of SESOP will have the following form $\tau_{k+1} = \argmin F^*(WY_k + WD_k \tau)$. Note, that after one communication over variable $Y_k$ and matrix of directions $D_k$ the method requires solving only a distributed low-dimensional problem. It can allow to significantly decrease communication complexity when initial problem has high dimension.

\subsection*{Acknowledgment}

The authors would like to thank Arkadi Nemirovski for some important remarks.


This work was supported by a grant for research centers in the field of artificial intelligence, provided by the Analytical Center for the Government of the Russian Federation in accordance with the subsidy agreement (agreement identifier 000000D730321P5Q0002) and the agreement with the Moscow Institute of Physics and Technology dated November 1, 2021 No. 70-2021-00138.

\bibliography{sn-bibliography}

\end{document}